\newcommand{\TT}{{\mathcal T}}
\newtheorem{theorem}{Theorem}[section]
\newtheorem{corollary}[theorem]{Corollary}
\newtheorem{question}[theorem]{Question}
\theoremstyle{definition}
\theoremstyle{remark}
\newtheorem{rem}[theorem]{Remark}
\begin{document}
\pagestyle{plain}
\title{A family of $\displaystyle \omega_1$ many topological types of locally finite trees.}
\author{Jorge Bruno}
\date{\today}
\maketitle

\begin{abstract}

\vspace{.6 in}

Two rooted locally finite trees are considered equivalent if both can be embedded into each other as topological minors by means of tree-order preserving mappings. By exploiting Nash-William's Theorem, Matthiesen provided a non-constructive proof of the uncountability of such equivalence classes, thus answering a question of van der Holst. As an open problem, Matthiesen asks for a constructive proof of this fact. The purpose of this paper is to provide one such construction; working solely within ZFC we illustrate a collection of $\omega_1$ many topological types of rooted trees. In particular, we also show that this construction strengthens that of Mathiesen in that it also applies to {\it free} (unrooted) trees of degree two.

 \end{abstract}
 
 \section{Introduction}
 
 For two rooted trees $T$ and $U$ we say that $\phi: V(T) \to V(U)$ is an {\it embedding} of $T$ into $U$ if there is an extension of $\phi$ from a subdivision of $T$ to the smallest subtree $U_T$ of $U$ containing all vertices from $\phi(T)$. Of course, there must be no other vertex from $U_T$ between the root of $U$ and $\phi$'s image of $T$'s root. Equivalently, one can define this embedding by using the {\it tree-order}: for a tree $T$, $a\leq b$ provided that $a$ lies in the path from the root of $T$ to $b$. In light of this, an embedding between two trees is then a tree-order preserving function $T \to U$. If any embedding $T \to U$ exists, $T$ is said to be a {\it topological minor} of $U$ and we write $T \preceq U$. It is simple to show that the collection of all locally finite trees with the topological minor relation forms a quasi-ordered set of size $\mathfrak{c}$ (i.e., size continuum). 
When both $T\preceq Y$ and $U\preceq T$ are true, $T$ and $U$ are said to be {\it equivalent} and the equivalence classes generated by this relation are called {\it topological types}. A natural question to ask is:

\begin{question} What is the size of the partially ordered set generated by considering all locally finite trees modulo this topological equivalence?
\end{question}

This question was originally posed by H. van der Holst and partially answered by Matthiesen in \cite{MR2236511} by non-constructive means. More precisely, let $\lambda$ denote the number of topological types of locally finite trees: clearly, $\omega \leq \lambda \leq \mathfrak{c}$ and Matthiesen refined it to $\omega_1 \leq \lambda \leq \mathfrak{c}$ by clever use of Nash-William's Theorem (which states that the infinite rooted trees are better-quasi-ordered under topological minor \cite{MR0175814}, \cite{MR1816801}). A good introduction to the subject can be found in \cite{MR2159259}. Matthiesen leaves as an open problem a constructive proof of this fact. Working solely within ZFC, we address Mathiesen's problem and extend her result by providing a construction of $\omega_1$ many topological types of free locally finite trees. In light of the Continuum Hypothesis (CH) this is the largest construction allowed within ZFC. We address this and other affine issues at the end of the paper. 
 
 \section{The construction}
 
 We will inductively define a family $\TT = \{T_\alpha \mid \alpha < \omega_1\}$ with the property that for all $\alpha < \beta < \omega_1$, $T_\alpha \preceq T_\beta \npreceq T_\alpha$. This collection would then be a specimen of an uncountable collection of topological types. Let's begin by defining $T_1$ to be a ray $R = v_1v_2v_3\ldots $ (denoted simply by $R$ from now on). The tree $T_2$ will be constructed by {\it attaching} (by {\it attach} we mean ``to join with an edge'') to each vertex of $R$ a copy of $T_1$ by its root. The resulting tree resembles a comb whose teeth are copies of $T_1$ and we denote this tree by $T_2$. In general:
 
 \begin{itemize}
 \item For $\alpha +1 =\beta$: $T_\beta$ is forged by a ray $R$ so that each vertex $v_i$ is attached to the root of a - unique - copy of $T_\alpha$. 

\medskip

 \item Whenever $\beta$ is limit: choose any strictly monotone and cofinal $\psi: \omega \to \beta$, for each $v_i$ in the ray $R$ attach a - unique - copy of $T_{\psi(i)}$ by its root. 
  \end{itemize}
 
 \noindent
For each tree $T_\alpha$ the ray used on its construction will be called its {\it spine}. 
 Next we prove our main result.
  
 \begin{theorem}\label{thm:construction} For any $\alpha < \beta < \omega_1$, $T_\alpha \preceq T_\beta \npreceq T_\alpha$.
 \end{theorem}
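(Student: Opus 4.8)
The plan is to prove the two halves of the statement separately: the inclusion $T_\alpha\preceq T_\beta$ by a routine transfinite induction, and the non-inclusion $T_\beta\npreceq T_\alpha$ by a more delicate transfinite induction on $\beta$.

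For $T_\alpha\preceq T_\beta$ with $\alpha<\beta$, I would induct on $\beta$, using only that $\preceq$ is transitive (noted in the introduction) and that for any vertex $x$ of a tree $U$ the ``upper'' subtree $U^{\ge x}$ is a topological minor of $U$ via the inclusion map. If $\beta=\delta+1$ then $T_\delta$ is literally a tooth of $T_\beta$, so $T_\delta\preceq T_\beta$, and for $\alpha\le\delta$ the inductive hypothesis gives $T_\alpha\preceq T_\delta\preceq T_\beta$. If $\beta$ is a limit with chosen cofinal $\psi$, pick $i$ with $\psi(i)\ge\alpha$; then $T_\alpha\preceq T_{\psi(i)}$ by the inductive hypothesis and $T_{\psi(i)}\preceq T_\beta$ since $T_{\psi(i)}$ is a tooth. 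The same bookkeeping yields a remark I will need later: each ``tail'' $T_\beta^{\ge v_N}$ of a spine still satisfies $T_\beta\preceq T_\beta^{\ge v_N}$ --- send spine to spine and map the $i$-th tooth $T_{\sigma(i)}$ into the $(N+i)$-th tooth $T_{\sigma(N+i)}$, which is legitimate because $\sigma(i)\le\sigma(N+i)$.

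For $T_\beta\npreceq T_\alpha$ I would argue by transfinite induction on $\beta$, establishing ``$T_\beta\npreceq T_\gamma$ for all $\gamma<\beta$'' at one stroke. Assume for contradiction that $T_\beta\preceq T_\gamma$ for some $\gamma<\beta$ and let $\alpha$ be least with $T_\beta\preceq T_\alpha$; fix an embedding $\phi\colon T_\beta\to T_\alpha$. Unwinding the definition, $\phi$ is injective and strictly order preserving; and since it extends to an isomorphism onto a subtree of $T_\alpha$, it preserves incomparability of vertices (and, being order preserving, it carries each subtree $T_\beta^{\ge c}$ into $T_\alpha^{\ge\phi(c)}$). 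The images $\phi(v_1)<\phi(v_2)<\cdots$ of the spine of $T_\beta$ trace out an infinite upward path in $T_\alpha$. The structural key is that an infinite upward path in $T_\alpha$ is either eventually a final segment of the spine of $T_\alpha$, or eventually trapped in a single tooth, since a tooth is an upper subtree and cannot be left once entered. In the second case, for large $N$ the map $\phi$ restricts to an embedding of the tail $T_\beta^{\ge v_N}$ into that tooth, which is a copy of some $T_\eta$ with $\eta<\alpha$; combined with $T_\beta\preceq T_\beta^{\ge v_N}$ this gives $T_\beta\preceq T_\eta$ with $\eta<\alpha$, contradicting the minimality of $\alpha$. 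Hence $\phi(v_i)$ is a spine vertex of $T_\alpha$ for all large $i$.

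It remains to handle this ``spine-to-spine'' case, and this is the step I expect to be the main obstacle. For each large $i$, the two children of $v_i$ in $T_\beta$ (the next spine vertex and the root of the tooth $S_i\cong T_{\sigma(i)}$) go to incomparable vertices above $\phi(v_i)$; chasing incomparabilities along the segment of the spine of $T_\alpha$ running from $\phi(v_i)$ to $\phi(v_{i+1})$ forces the whole subtree $\phi(S_i)$ to lie inside one of the finitely many teeth of $T_\alpha$ rooted at spine vertices in the half-open segment $[\phi(v_i),\phi(v_{i+1}))$. Each such tooth is a copy of some $T_\eta$ with $\eta<\alpha$, so $T_{\sigma(i)}\preceq T_{\eta(i)}$ with $\eta(i)<\alpha<\beta$. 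Now the induction closes: if $\beta=\delta+1$ then $\sigma(i)=\delta$ and $\eta(i)<\alpha\le\delta$, and the inductive hypothesis at $\delta$ forbids $T_\delta\preceq T_{\eta(i)}$; if $\beta$ is a limit, pick $i$ large enough that also $\psi(i)>\alpha$, so that $\eta(i)<\psi(i)<\beta$, and the inductive hypothesis at $\psi(i)$ forbids $T_{\psi(i)}\preceq T_{\eta(i)}$. Either way we reach a contradiction. Besides the ``single-tooth'' bookkeeping just sketched --- whose rigor rests on the order-theoretic properties of topological-minor embeddings (preservation of incomparability, and $T^{\ge c}\mapsto U^{\ge\phi(c)}$) --- the only other point requiring care is the cofinality juggling in the limit case, where one uses that every final segment of the cofinal map $\psi$ is still cofinal in $\beta$.
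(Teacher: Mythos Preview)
Your proof is correct. The underlying mechanism is the same as the paper's---incomparability preservation forces teeth into teeth---but the organisation differs in two notable ways.

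First, the paper dispatches the limit case $\beta$ in one line, without any spine analysis: if $T_\beta\preceq T_\alpha$ with $\alpha<\beta$, then in particular each tooth $T_{\psi(j)}$ of $T_\beta$ embeds in $T_\alpha$, and choosing $j$ with $\psi(j)>\alpha$ already contradicts the inductive hypothesis at $\psi(j)$. You instead route the limit case through the same spine-to-spine machinery as the successor case; this is correct but does more work than necessary.

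Second, in the successor case the paper never introduces a minimal $\alpha$ or the tail lemma $T_\beta\preceq T_\beta^{\ge v_N}$. It argues directly with $\gamma=\beta+1$ and $\alpha=\beta$: a tooth $T_\beta^i$ of $T_\gamma$ cannot sit entirely inside a tooth of $T_\beta$ (that would give $T_\beta\preceq T_\delta$ for some $\delta<\beta$, killed by induction), so some vertex of $T_\beta^i$ lands on the spine of $T_\beta$; incomparability then traps the entire rest of $T_\gamma$ in finitely many teeth below that point, forcing some later $T_\beta^{i'}$ wholly into a single tooth---contradiction. Your minimal-$\alpha$ device and tail lemma give a cleaner, more uniform argument (you track where the \emph{spine} of $T_\beta$ goes, rather than where a \emph{tooth} goes), at the cost of a little extra bookkeeping. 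Both approaches are sound; yours is more systematic, the paper's more economical.
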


 \begin{proof} By design it should be clear that for any $\alpha < \beta < \omega_1$, $T_\alpha \preceq T_\beta$. We thus focus on the latter assertion and notice that $T_1 \preceq T_2 \npreceq T_1$ starts the induction. For any $\beta = \alpha +1$ notice that since $\TT$ is nested, we need only show that $T_\beta \npreceq T_\alpha$. If $\beta$ is limit, we still have to show that $T_\beta \npreceq T_\alpha$ for all $\alpha < \beta$. Assume that for a given $\alpha < \beta < \gamma < \omega_1$ we have $T_\alpha \preceq T_\beta \npreceq T_\alpha$.\\
 
 \noindent
 \underline{$\gamma$ is a limit ordinal:}\\
 
 Let $\psi: \omega \to \gamma$ be the strictly monotone and cofinal function defining $T_\gamma$. Assume that for some $\alpha < \gamma$, $T_\gamma \preceq T_\alpha$ and notice that for some $j\in \omega$ we have $ \alpha < \psi(j)$. Since the j$^\text{th}$ branch of $T_\gamma$ is $T_{\psi(j)}$, this then yields $T_{\psi(j)} \preceq T_\alpha$, an impossibility. \\

 \noindent
 \underline{$\gamma = \beta + 1$:}\\
 
 Assume that there is an embedding from $T_\gamma$ into $T_\beta$ and consider any branch $T_\beta^i$ (the i$^{\text{th}}$ copy of $T_\beta$ along $T_\gamma$'s spine). The embedding cannot map $T_\beta^i$ strictly within any branch that stems off of $T_\beta$'s spine (recall that by induction $T_\beta$ cannot be embedded into any such branch). However, if any vertex of $T_\beta^i$ is mapped to a vertex, $p$, on the spine of $T_\beta$ then the since the embedding preserves tree-order (i.e., incomparable vertices are mapped to incomparable vertices) no branch in $T_\gamma$ higher up than $T_\beta^i$ can be mapped anywhere above $p$. This leaves only finitely many branches in $T_\beta$ for mapping the rest of $T_\gamma$. Yielding that at least one branch of $T_\gamma$ will be mapped within a branch of $T_\beta$ and another contradiction.

 \end{proof}

\begin{corollary} The family $\TT$ contains $\omega_1$ many topological types of free locally finite trees.
\end{corollary}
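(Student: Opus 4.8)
The plan is to establish the stronger assertion that for all $\alpha<\beta<\omega_1$ we have $T_\beta\npreceq T_\alpha$ even when $\preceq$ is read as the \emph{free} (unrooted) topological minor relation, i.e., ``some subdivision of $T_\beta$ is a subgraph of $T_\alpha$'', with no condition on roots. Granting this, the corollary is immediate: if $\alpha<\beta$ and $T_\alpha$ were equivalent to $T_\beta$ as free trees then in particular $T_\beta\preceq T_\alpha$, so the $T_\alpha$ are pairwise inequivalent as free trees; each $T_\alpha$ is locally finite by a routine induction; hence $\TT$ represents $\omega_1$ distinct free topological types. Throughout, write $\beta_i$ for the index of the tooth attached at the $i$-th spine vertex of $T_\beta$, so that $\beta_i=\beta-1$ for every $i$ when $\beta$ is a successor and $\beta_i=\psi(i)$ when $\beta$ is a limit (with $\psi$ the cofinal map used in the construction); record for later that $\beta_i\ge\alpha$ for all sufficiently large $i$ whenever $\alpha<\beta$.

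The heart of the argument is a description of the rays of $T_\alpha$, which I would isolate as a lemma (the ``ray lemma''): \emph{every ray of $T_\alpha$ eventually coincides with a tail of the spine of one of the copies of $T_\delta$, $\delta\le\alpha$, appearing in the construction of $T_\alpha$.} I would prove this by well-foundedness of the ordinals: after discarding a finite initial segment, a ray of $T_\alpha$ may be assumed to issue from the root $v_1$, and such a ray either runs up the spine of $T_\alpha$ forever or runs along it for finitely many steps and then turns into the tooth $T_{\alpha_k}$ at some $v_k$, at which point one applies the statement to $T_{\alpha_k}$, where $\alpha_k<\alpha$. Since the ordinal strictly decreases at each turn into a tooth, only finitely many turns occur. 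The consequence I actually need is that along such a tail the only subtrees of $T_\alpha$ that branch off the ray are the teeth $T_{\delta_j}$ of that copy of $T_\delta$, each of index $\delta_j<\delta\le\alpha$ (and there are none at all when $\delta=1$).

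Next I would suppose, for contradiction, that $\phi$ is a free topological minor embedding of $T_\beta$ into $T_\alpha$ with $\alpha<\beta$. Since a topological minor embedding carries paths to paths, the image of the spine $v_1v_2v_3\dots$ of $T_\beta$, completed by the internally disjoint subdivision paths joining consecutive $\phi(v_i)$, is a ray $\rho$ of $T_\alpha$; and for each $i$ the subdivided copy of the tooth $T_{\beta_i}$ of $T_\beta$ is sent, disjointly from $\rho$ and from the images of the other teeth, into a single subtree branching off $\rho$ at $\phi(v_i)$. Applying the ray lemma to $\rho$, for all sufficiently large $i$ that subtree is a copy of some $T_{\delta_{j(i)}}$ with $\delta_{j(i)}<\alpha$, so $T_{\beta_i}\preceq T_{\delta_{j(i)}}$, and Theorem~\ref{thm:construction} forces $\beta_i\le\delta_{j(i)}<\alpha$. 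This contradicts the fact recorded in the first paragraph that $\beta_i\ge\alpha$ for all large $i$, so no such $\phi$ exists.

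The step I expect to demand the most care is the claim that the image of each tooth is confined to a single branch hanging off $\rho$ at $\phi(v_i)$. For this one must check that on the tail where $\rho$ is the spine of a copy of $T_\delta$ the vertex $\phi(v_i)$ is a genuine spine vertex of that copy, so that --- $T_\alpha$ having maximum degree $3$ --- exactly one branch (the tooth $T_{\delta_{j(i)}}$) dangles off it, and then one must invoke the disjointness clauses in the definition of topological minor to trap the tooth's image inside that branch. I would organise this by working explicitly with the subdivision of $T_\beta$ realised inside $T_\alpha$ and with the observation that removing the interior of $\rho$ from $T_\alpha$ separates the images of the various teeth from one another.
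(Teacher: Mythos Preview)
Your argument is substantially more than the paper offers: the paper's ``proof'' of this corollary is the single sentence that the claim is evident from the proof of Theorem~\ref{thm:construction}. Since that proof invokes tree-order preservation (a rooted notion) in the successor step, your ray lemma is precisely the ingredient needed to make the free case honest; it lets you control the image of $T_\beta$'s spine inside $T_\alpha$ without any hypothesis on roots, and your degree-$3$ observation then traps each tooth in a single branch off that spine. So your route is the natural one and genuinely adds content beyond what the paper writes down.

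One point needs tightening. At the end you obtain a \emph{free} embedding $T_{\beta_i}\preceq T_{\delta_{j(i)}}$ and then appeal to Theorem~\ref{thm:construction}, but that theorem is stated and proved for the \emph{rooted} relation, so as written it does not apply. Two easy repairs are available. First, you can run the whole argument as an induction on $\beta$ for the free statement, replacing the appeal to Theorem~\ref{thm:construction} by the inductive hypothesis (legitimate since $\delta_{j(i)}<\alpha\le\beta_i<\beta$). Second --- and this fits better with what you already wrote --- you can check that the restricted embedding is in fact rooted: the subdivision of the attaching edge is a path from $\phi(v_i)$ through the root of $T_{\delta_{j(i)}}$ to $w:=\phi(\text{root of }T_{\beta_i})$, and for every vertex $u$ in the image of $T_{\beta_i}$ the unique $\phi(v_i)$--$u$ path in $T_\alpha$ passes through $w$; hence $w$ lies below the entire image in the tree-order of $T_{\delta_{j(i)}}$, and the map preserves that order, so Theorem~\ref{thm:construction} applies as stated. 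With either fix your proof is complete.
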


\begin{proof} It is evident from the proof of Theorem~\ref{thm:construction} that even considered as free trees, the family $\TT$ contains $\omega_1$ many topological types. 
\end{proof}
 
\begin{rem} Due Nash-Williams Theorem we are forced to construct well-ordered chains when searching for large families of topological types of locally finite trees; the {\it  width} of any such family must be finite and thus the bulk of its cardinality must be derived from its height.
\end{rem}

 \section{Conclusions}
 
 Working within ZFC and due to CH, in terms of cardinality, one cannot construct a larger example than the one presented here. It remains an open question whether or not it is a ZFC theorem that any family of topological types of locally finite trees must have cardinality of at most $\omega_1$.

\begin{question} Is it a theorem of ZFC that there does not exists a family of topological types whose size exceeds $\omega_1$?
\end{question}

 It is simple to show that any tree that contains a copy of all trees in $\TT$ must have a copy of the full binary tree. By our closing remark above, this then suggest that the bulk of any potentially large family of topological types (consistent with ZFC) must be developed from well-ordered chains of trees containing the full binary tree. More precisely we have the following questions.

\begin{question} How many topological types of locally finite trees with a finite number of rays are there?
\end{question}

For any well-ordered set, its {\it order type} is the only ordinal which is order-equivalent to it.

\begin{question} Is it possible to construct, for any $\alpha \in \omega_1$ a family of topological types of locally finite trees of order type $\alpha$?
\end{question}

We deal with the above questions in \cite{trees}.

\bibliographystyle{plain}
\bibliography{mybib}

\end{document}